\newcommand{\R}{\mathbb{R}}
\newcommand{\xset}{\mathcal{X}}
\newcommand{\yset}{\mathcal{Y}}
\newcommand{\xfset}{\mathbb{X}}
\newcommand{\yfset}{\mathbb{Y}}
\newcommand{\cset}{\mathcal{U}}
\newcommand{\cfset}{\mathbb{U}}
\newcommand{\dset}{\mathcal{D}}
\newcommand{\dfset}{\mathbb{D}}
\newcommand{\reachset}{\mathcal{V}}
\newcommand{\targetset}{\mathcal{L}}
\newtheorem{prop}{Proposition}
\title{\LARGE \bf Fast Reachable Set Approximations via State Decoupling Disturbances}
\author{Mo Chen*, Sylvia Herbert*, and Claire J. Tomlin
\thanks{This work has been supported in part by NSF under CPS:ActionWebs (CNS-931843), by ONR under the HUNT (N0014-08-0696) and SMARTS (N00014-09-1-1051) MURIs and by grant N00014-12-1-0609, by AFOSR under the CHASE MURI (FA9550-10-1-0567). The research of M. Chen has received funding from the ``NSERC PGS-D'' Program.}
\thanks{* Both authors contributed equally to this work. All authors are with the Department of Electrical Engineering and Computer Sciences, University of California, Berkeley. \{mochen72, sylvia.herbert, tomlin\}@berkeley.edu}
}
\begin{document}
\maketitle
\thispagestyle{empty}
\pagestyle{empty}

\begin{abstract}
With the recent surge of interest in using robotics and automation for civil purposes, providing safety and performance guarantees has become extremely important. In the past, differential games have been successfully used for the analysis of safety-critical systems. In particular, the Hamilton-Jacobi (HJ) formulation of differential games provides a flexible way to compute the reachable set, which can characterize the set of states which lead to either desirable or undesirable configurations, depending on the application. While HJ reachability is applicable to many small practical systems, the curse of dimensionality prevents the direct application of HJ reachability to many larger systems. To address computation complexity issues, various efficient computation methods in the literature have been developed for approximating or exactly computing the solution to HJ partial differential equations, but only when the system dynamics are of specific forms. In this paper, we propose a flexible method to trade off optimality with computation complexity in HJ reachability analysis. To achieve this, we propose to simplify system dynamics by treating state variables as disturbances. We prove that the resulting approximation is conservative in the desired direction, and demonstrate our method using a four-dimensional plane model.
\end{abstract}

\section{Introduction}
Optimal control problems and differential games are important and powerful theoretical tools for analyzing a wide variety of systems, particularly in safety-critical scenarios. They have been extensively studied in the past several decades \cite{Varaiya67, Evans84, Barron90, Tomlin00, Mitchell05, Bokanowski10, Fisac15}, and have been successfully applied to practical problems such as pairwise collision avoidance \cite{Mitchell05}, aircraft in-flight refueling \cite{Ding08}, vehicle platooning \cite{Chen15b}, and many others \cite{Bayen07, Huang11}. With the recent growing interest in using safety-critical autonomous systems such as autonomous cars and unmanned aerial vehicles for civil purposes \cite{FAA13, Amazon16, BBC16, AUVSI16, NASA16, Kopardekar16}, the importance and necessity of having flexible tools that can provide safety guarantees have substantially increased.

Intuitively, in an optimal control problem, one seeks to find the cheapest way a system described by an ordinary differential equation (ODE) model can perform a certain task. In a differential game, a system is controlled by two adversarial agents competing to respectively minimize and maximize a joint cost function. Hamilton-Jacobi (HJ) reachability is a common and effective way to analyze both optimal control problems and differential games because of the guarantees that it provides and its flexibility with respect to the system dynamics. 

In a reachability problem, one is given some system dynamics described by an ODE, and a target set which describes the set of final conditions under consideration. Depending on the application, the target set can represent either a set of desired or undesired states. The goal in reachability analysis is to compute the backward reachable set (BRS). When the target set is a set of desired states, the BRS represents the set of states from which the system can be guaranteed to be driven to the target set, despite the worst case disturbance. In contrast, when the target set is a set of undesired states, the BRS represents the set of states from which the system may be driven into the target set under some disturbance, despite its best control efforts to remain outside. Because of the theoretical guarantees that reachability analysis provides, it is ideal for analyzing the newest problems involving autonomous systems. 

Despite these advantages, there are serious drawbacks to using HJ Reachability on large systems. In order to compute the BRS, an HJ partial differential equation (PDE) must be solved on a grid representing a discretization of the state space. This means that the complexity of computing a BRS grows exponentially with the number of system states, making the standard HJ reachability formulation intractable for systems higher than approximately five dimensions. To address this difficulty, a number of approximation techniques have been developed, such as those involving projections, approximate dynamic programming, and occupation measure theory \cite{Mitchell03, McGrew08, Lasserre08}. 

In this paper, we propose a general method to remove coupling in systems by treating coupling variables as disturbances. This uncoupling of dynamics transforms the system into a form that is suitable for analysis using methods such as \cite{Mitchell11} and \cite{Chen15}, which exploit system structure. This method can also be combined with previous work such as \cite{Mitchell03, McGrew08, Lasserre08}. to reduce computation complexity even further. We show that our approach results in BRSs that are conservative in the desired direction, and demonstrate the performance of our method when combined with the decoupled formulation in \cite{Chen15}.

\section{Problem Formulation \label{sec:formulation}}
Consider a differential game between two players described by the time-invariant system with state $z\in \R^n$ evolving according to the ODE

\begin{equation}
\label{eq:dyn}
\begin{aligned}
\dot{z} &= f(z, u, d), \quad t \in [-T, 0] \\
u &\in \cset, d \in \dset
\end{aligned}
\end{equation}

\noindent where $u$ is the control of Player 1, and $d$ is the control of Player 2. Often, $u$ and $d$ are regarded as the control and disturbance, respectively, of a system described by \eqref{eq:dyn}. We assume $f:\R^n\times \cset \times \dset \rightarrow \R^n$ is uniformly continuous, bounded, and Lipschitz continuous in $z$ for fixed $u,d$, and the control functions $u(\cdot)\in\mathbb{U},d(\cdot)\in\mathbb{D}$ are drawn from the set of measurable functions\footnote{A function $f:X\to Y$ between two measurable spaces $(X,\Sigma_X)$ and $(Y,\Sigma_Y)$ is said to be measurable if the preimage of a measurable set in $Y$ is a measurable set in $X$, that is: $\forall V\in\Sigma_Y, f^{-1}(V)\in\Sigma_X$, with $\Sigma_X,\Sigma_Y$ $\sigma$-algebras on $X$,$Y$.}. As in \cite{Varaiya67, Evans84, Mitchell05}, we allow Player 2 to only use nonanticipative strategies $\gamma$, defined by

\begin{equation}
\begin{aligned}
\gamma \in \Gamma &:= \{\mathcal{N}: \cfset \rightarrow \dfset \mid u(r) = \hat{u}(r) \text{ a. e. } r\in[t,s] \\
&\Rightarrow \mathcal{N}[u](r) = \mathcal{N}[\hat{u}](r) \text{ a. e. } r\in[t,s]\}
\end{aligned}
\end{equation}

Suppose that the state can be written as $z = (x,y)$ such that the control $u$ and disturbance $d$ can be written as $u = (u_x, u_y), d = (d_x, d_y)$, and such a decomposition of the control leads to the following form of system dynamics:
\begin{equation}
\label{eq:scdyn} 
\begin{aligned}
\dot{x} &= g(x, y, u_x, d_x) \quad \dot{y} = h(x, y, u_y, d_y) \\
u_x &\in \cset_x, u_y \in \cset_y, \quad d_x \in \dset_x, d_y \in \dset_y, \quad t \in [-T, 0] \\
\end{aligned}
\end{equation}

\noindent where $x \in \R^{n_x}, y \in \R^{n_y}, n_x + n_y = n$, and $g,h$ are components of the system dynamics that involve $(u_x, d_x), (u_y, d_y)$, respectively. Note that this assumption on $u$ and $d$ is a mild one, and is satisfied by any system in which each of the control components $u_x, u_y$ and disturbance components $d_x, d_y$ have independent control sets $\cset_x, \cset_y$ and disturbance sets $\dset_x, \dset_y$, respectively; note that we can also write $\cset = \cset_x \times \cset_y$, and $\dset = \dset_x \times \dset_y$. This decomposition is very common in real world systems, where control input bounds such as maximum acceleration and maximum turn rate are independent of each other.

For the system in the form of \eqref{eq:scdyn}, we would like to compute the BRS of time horizon $T$, denoted $\reachset(T)$. Intuitively, $\reachset(T)$ is the set of states from which there exists a control strategy to drive the system into a target set $\targetset$ within a duration of $T$ despite worst-case disturbances. Formally, the BRS is defined as\footnote{Similar definitions of BRSs and their relationships can be found in, for example, \cite{Mitchell07}}

\begin{equation}
\label{eq:reachset}
\begin{aligned}
\reachset(T) = &\{z_0\in\R^n: \exists u(\cdot)\in\cfset, \forall \gamma\in\Gamma, \\
&z(\cdot) \text{ satisfies \eqref{eq:scdyn}}, z(-T) = z_0\Rightarrow z(0) \in \targetset\}
\end{aligned}
\end{equation}

Standard HJ formulations exist for computing the BRS in the full dimensionality $n$ \cite{Barron90, Mitchell05, Bokanowski10, Fisac15}. In addition, special HJ formulations can be used to substantially reduce computation complexity for systems with special properties such as having terminal integrators or having fully decoupled dynamics \cite{Mitchell11, Chen15}. The goal of this paper will be to demonstrate how to take advantage of previous work on BRS computation for systems of particular forms, even when the actual system dynamics do not exactly satisfy necessary assumptions. For concreteness, in this paper we will focus on removing coupling to put systems into a fully decoupled form that satisfies the assumptions in \cite{Chen15}.

Our proposed approach computes an approximation of the BRS in dimension $\max(n_x, n_y)$ instead of in dimension $n$, dramatically reducing computation complexity. This is done by removing coupling in the dynamics by treating certain variables as disturbances. The computed approximation is conservative in the desired direction, meaning any state in the approximate BRS is also in the true BRS. 

\section{Background \label{sec:summary}}
Given the system in \eqref{eq:dyn}, the BRS $\reachset(t)$ defined in \eqref{eq:reachset} can be computed using the following terminal value HJ PDE:
\vspace{-0.5em}
\begin{equation}
\label{eq:HJIPDE}
\begin{aligned}
D_t V(t, z) + \min_{u\in\cset} \max_{d\in\dset} D_z V(t,z) \cdot f(z,u,d) &= 0 \\
V(0,z) &= l(z)
\end{aligned}
\end{equation}

In \eqref{eq:HJIPDE}, the target set $\targetset$ is represented as the sub-zero level set of the function $l(z)$: $\targetset = \{z \in \R^n: l(z) \le 0\}$. The BRS $\reachset(T)$ is represented as the viscosity solution of \eqref{eq:HJIPDE} $V(t,z)$: $\reachset(t) = \{z \in \R^n: V(t, z) \le 0\}$ \cite{Mitchell05, Crandall83}. The definition given in \eqref{eq:reachset} assumes that the control $u$ tries to grow the BRS as much as possible, and the disturbance $d$ tries to do the opposite. Similar definitions of the BRS, for example one in which the control tries to inhibit its growth, can be computed by adjusting the minimum and maximum. For simplicity, we will assume that the BRS is defined by $\eqref{eq:reachset}$, and thus determined by solving $V(t, z)$ given in \eqref{eq:HJIPDE}.

In the case that the system is fully decoupled in the form of \eqref{eq:fddyn}, \cite{Chen15} provides a method to exactly compute $V(t, z)$. The computation is done in each of the decoupled components, substantially reducing computation complexity as long as the system is decoupled and as long as the target $\targetset$ can be written as the intersection $\cap_i \targetset_i$.

\begin{equation}
\label{eq:fddyn} 
\begin{aligned}
\dot{x_i} &= f_i(x_i, u_i, d_i), \quad t\in [-T,0] \\
u_i &\in \cset_i, d_i \in \dset_i, \quad i = 1,\ldots, N
\end{aligned}
\end{equation}

The reason for the dimension reduction is intuitive, and can be easily verified by checking that the following statements are equivalent:

\begin{enumerate}
\item $\exists u_i \in \cfset_i, \forall d_i \in \dfset_i, x_i(\cdot) \text{ satisfies \eqref{eq:fddyn}},\\x_i(0) \in \targetset_i, i = 1,2$
\item $\exists (u_1,u_2) \in \cfset_1 \times \cfset_2, \forall (d_1, d_2) \in \dfset_1 \times \dfset_2, \\ x_i(\cdot) \text{ satisfies \eqref{eq:fddyn}}, (x_1(0), x_2(0)) \in \targetset_1 \cap \targetset_2$
\item $\exists u \in \cfset, \forall d \in \dfset, x(\cdot) \text{ satisfies \eqref{eq:dyn}}, x(0) \in \targetset$
\end{enumerate}
%
%
%
%
%

\section{Coupling as Disturbance}
The decoupled formulation summarized in Section \ref{sec:summary} enables the exact computation of $n$ dimensional BRSs in the space of the $n_i$ dimensional decoupled components. However, this substantial computation benefit can only be gained if the system dynamics satisfy \eqref{eq:fddyn}.

In the case where the dynamics are in the form of \eqref{eq:scdyn}, the decoupled formulation cannot be directly used. However, if one treats $y$ as a disturbance in the function $g$, and $x$ as a disturbance in the function $h$, the system would become decoupled. Mathematically, \eqref{eq:scdyn} becomes the following:
\begin{equation}
\label{eq:udyn} 
\begin{aligned}[c]
\dot{x} &= \hat{g}(x, y_d, u_x, d_x)\\
x &\in \xset, y_d \in \yset\\
u_x &\in \cset_x, d_x \in \dset_x\\
\end{aligned}
\quad
\begin{aligned}[c]
\dot{y} &= \hat{h}(y, x_d, u_y, d_y) \\
x_d &\in \xset, y \in \yset\\
u_y &\in \cset_y, d_y \in \dset_y\\
\end{aligned}
\quad
\begin{aligned}[c]
\\
t &\in [-T, 0]\\
\\
\end{aligned}
\end{equation}

\noindent where $\xset \times \yset$ represents the full-dimensional domain over which computation is done. By treating the coupled variables as a disturbance, we have uncoupled the original system dynamics \eqref{eq:scdyn}, and produced approximate dynamics \eqref{eq:udyn} that are decoupled, allowing us to do the computation in the space of each decoupled component. 



Compared to the original system dynamics given in \eqref{eq:scdyn}, the uncoupled dynamics given in \eqref{eq:udyn} experiences a larger disturbance, since the $y$ dependence of the function $g$ and the $x$ dependence on the function $h$ are treated as disturbances. With the definition of BRS in \eqref{eq:reachset}, the approximate BRS computed using the dynamics \eqref{eq:udyn} is an under-approximation of the true BRS. We formalize this in the proposition below.

\begin{prop}
Let $\reachset_x(T), \reachset_y(T)$ be the BRSs of the subsystem \eqref{eq:udyn} from the target sets $\targetset_x, \targetset_y$, and let $\reachset(T)$ be the BRS of the system \eqref{eq:scdyn} from the target set $\targetset$. Then\footnote{Strictly speaking,  $\targetset_x, \targetset_y ,\reachset_x(T), \reachset_y(T)$ would need to be ``back projected'' into the higher dimensional space before their intersections can be taken, but we will use the abuse of notation for convenience.}, $\targetset = \targetset_x \cap \targetset_y \Rightarrow \reachset_x(T) \cap \reachset_y(T) \subseteq \reachset(T)$.
\end{prop}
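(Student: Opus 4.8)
The plan is to show that every $z_0 = (x_0, y_0)$ with $x_0 \in \reachset_x(T)$ and $y_0 \in \reachset_y(T)$ belongs to $\reachset(T)$; after the back-projection mentioned in the footnote, $\reachset_x(T) \cap \reachset_y(T)$ is exactly the set of such product points, so this is the correct reading of the claim. First I would unpack what membership in the subsystem BRSs provides. In the decoupled $x$-subsystem of \eqref{eq:udyn}, the control is $u_x$ and the disturbance is the \emph{enlarged} pair consisting of the original $d_x$ together with the state-decoupling disturbance $y_d$ ranging over the full domain $\yset$; the situation for the $y$-subsystem is symmetric, with $x_d$ ranging over $\xset$.

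The key preliminary step is a reduction from nonanticipative strategies to disturbance signals. Because the control in \eqref{eq:reachset} commits to an open-loop signal while the guarantee must hold against \emph{every} nonanticipative strategy, and because every constant map $\gamma[u] \equiv c$ is trivially nonanticipative, the condition $x_0 \in \reachset_x(T)$ is equivalent to the statement that there exists an open-loop control $u_x(\cdot)$ such that for \emph{every} measurable disturbance signal $(d_x(\cdot), y_d(\cdot))$ the resulting $x$-trajectory satisfies $x(0) \in \targetset_x$; likewise for $y_0$. This equivalence is what lets me avoid the information-pattern subtlety noted below.

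With the two witness controls fixed, I would take the full-system control to be the concatenation $u(\cdot) = (u_x(\cdot), u_y(\cdot))$ and then fix an arbitrary $\gamma \in \Gamma$, producing a disturbance $d(\cdot) = (d_x(\cdot), d_y(\cdot))$ and a full trajectory $(x(\cdot), y(\cdot))$ of \eqref{eq:scdyn}. The central observation is that, since $\hat g$ is simply $g$ with the argument $y$ relabeled as the disturbance $y_d$, the component $x(\cdot)$ exactly solves the $x$-subsystem of \eqref{eq:udyn} driven by $u_x(\cdot)$ and the disturbance signal $(d_x(\cdot), y(\cdot))$, where the realized trajectory $y(\cdot)$ plays the role of $y_d(\cdot)$; symmetrically $y(\cdot)$ solves the $y$-subsystem with $x(\cdot)$ as $x_d(\cdot)$. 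As $y(\cdot)$ is a measurable function into $\yset$, it is an admissible disturbance signal, so the reduced subsystem guarantee forces $x(0) \in \targetset_x$, and likewise $y(0) \in \targetset_y$. Hence $z(0) \in \targetset_x \cap \targetset_y = \targetset$, and since $\gamma$ was arbitrary, $z_0 \in \reachset(T)$.

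The main obstacle I anticipate is exactly the information pattern: a nonanticipative strategy for the $x$-subsystem may react only to $u_x$, whereas the realized $y(\cdot)$ depends on $u_y$ and on the full-system $\gamma$ as well, so one cannot naively present $y(\cdot)$ as the causal output of a subsystem strategy. The constant-strategy reduction resolves this cleanly, because by first committing to the open-loop controls and only afterwards reading off $y(\cdot)$ as a fixed signal, I never require the decoupling disturbance to be a causal response. The remaining details are routine: verifying that the concatenated measurable controls remain admissible, and that the trajectory components stay within the computation domain $\xset \times \yset$ so that $y(\cdot)$ and $x(\cdot)$ are legitimate disturbance signals.
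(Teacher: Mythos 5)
Your proof is correct and follows essentially the same route as the paper's: fix the witness open-loop controls from the two subsystem BRSs, observe that the realized components $x(\cdot), y(\cdot)$ of the resulting coupled trajectory are admissible decoupling-disturbance signals, and substitute them for $x_d, y_d$ so that \eqref{eq:udyn} coincides with \eqref{eq:scdyn} and the subsystem guarantees force $z(0) \in \targetset_x \cap \targetset_y = \targetset$. Your explicit reduction from nonanticipative strategies to open-loop disturbance signals (via constant strategies), and your reading off $y(\cdot)$ from the already-solved coupled trajectory rather than substituting self-referentially, are careful refinements of details the paper leaves implicit, not a different approach.
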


\begin{proof}
It suffices to show that given any state $(x_0, y_0) = (x(-T), y(-T))$ such that $x_0, y_0$ are in the BRSs $\reachset_x(T), \reachset_y(T)$ for the system in \eqref{eq:udyn}, respectively, then $(x_0, y_0)$ is in the BRS $\reachset(T)$ for the system in \eqref{eq:scdyn}.

For convenience, we will use $x(\cdot) \in \xfset$ to denote $x(s) \in \xset~ \forall s \in [-T, 0]$, with $y(\cdot) \in \yfset$ having the analogous meaning. Applying the definition of BRS in \eqref{eq:reachset} to the subsystems in \eqref{eq:udyn}, at the state $z_0 = (x_0, y_0)$ we have
\begin{enumerate}
\item $\exists u_x \in \cfset_x, \forall d_x \in \dfset_x, \forall y_d \in \yfset, x(\cdot) \text{ satisfies \eqref{eq:udyn}},\\x(0) \in \targetset_x$
\item $\exists u_y \in \cfset_y, \forall d_y \in \dfset_y, \forall x_d \in \xfset, y(\cdot) \text{ satisfies \eqref{eq:udyn}},\\y(0) \in \targetset_y$
\end{enumerate}

The above two conditions together imply
\vspace{-0.5em}
\begin{equation}
\begin{aligned}
\exists (u_x, u_y) \in \cfset_x \times \cfset_y, \forall (d_x, d_y) \in \dfset_x \times \dfset_y, \\
\forall (x_d, y_d) \in \xfset \times \yfset, (x(\cdot), y(\cdot)) \text{ satisfies \eqref{eq:udyn}}, \\
(x(0), y(0)) \in \targetset
\end{aligned}
\end{equation}

In particular, since $x(\cdot) \in \xfset, y(\cdot) \in \yfset$, the above is true also when $x_d = x(\cdot), y_d = y(\cdot)$, so
\vspace{-0.5em}
\begin{equation}
\begin{aligned}
\exists (u_x, u_y) \in \cfset_x \times \cfset_y, \forall (d_x, d_y) \in \dfset_x \times \dfset_y, \\
(x_d, y_d) = (x(\cdot), y(\cdot)), (x(\cdot), y(\cdot)) \text{ satisfies \eqref{eq:udyn}}, \\
(x(0), y(0)) \in \targetset
\end{aligned}
\end{equation}

But if  $x_d = x(\cdot), y_d = y(\cdot)$, then \eqref{eq:udyn} becomes \eqref{eq:scdyn}, thus
\vspace{-0.5em}
\begin{equation}
\begin{aligned}
\exists (u_x, u_y) \in \cfset_x \times \cfset_y, \forall (d_x, d_y) \in \dfset_x \times \dfset_y, \\
(x(\cdot), y(\cdot)) \text{ satisfies \eqref{eq:scdyn}}, (x(0), y(0)) \in \targetset.
\end{aligned}
\end{equation}
\end{proof}

By treating the coupled states as disturbance, the computation complexity reduces from $O(k^nT)$ for the full formulation to $O(k^{\max\{n_x, n_y\}}T)$ for the decoupled approximate system \eqref{eq:udyn}.

\section{Disturbance Splitting \label{sec:couplingsplitting}}
By treating coupling variables $y$ and $x$ as disturbances in $g$ and $h$, respectively, we introduce conservatism in the BRS computation. This conservatism is always in the desired direction. In situations where $\xset$ and $\yset$ are large, the degree of conservatism can be reduced by splitting the disturbance $x_d$ and $y_d$ into multiple sections, as long as the target set $\targetset$ does not depend on the state variables being split. For example, $x_d \in \xset$ can be split as follows:

\begin{equation}
\begin{aligned}
x_d^i \in \xset_i, i = 1, 2, \ldots, M \\
\text{where } \bigcup_{i=1}^M \xset_i = \xset
\end{aligned}
\end{equation}

This disturbance splitting results in the following family of approximate system dynamics
\begin{equation}
\label{eq:usdyn} 
\begin{aligned}[c]
\dot{x} &= g(x, y_d, u_x, d_x) \\
u_x &\in \cset_x,d_x \in \dset_x \\
&y_d \in \yset \\
t &\in [-T, 0] \\
\end{aligned}
\qquad
\begin{aligned}[c]
\dot{y} &= h(y, x_d^i, u_y, d_y) \\
u_y &\in \cset_y,  d_y \in \dset_y \\
&x_d^i \in \xset^i \\
i &= 1, 2, \ldots, M_x \\
\end{aligned}
\end{equation}

\noindent from which a BRS can be computed in $\xset^i \times \yset$. Since $\xset^i \subseteq \xset$, the uncoupling disturbance is reduced whenever the disturbance $x_d$ is split.  In addition, the uncoupling disturbance $y_d$ can also be split into $M_y$, for a total of $M = M_x M_y$ total ``pieces'' of uncoupling disturbances. However, a smaller disturbance bound also restricts the allowable trajectories of each approximate system, so overall it is difficult to determine \textit{a priori} the optimal way to split the uncoupling disturbances. The trade-off between the size of disturbance bound and degree of restriction placed on trajectories can be seen in Fig. \ref{fig:VolRatio_V}.

\subsection{Examples of Decoupling System Dynamics}
Our proposed method applies to any system of the form \eqref{eq:scdyn}, as we will demonstrate with the example in Section \ref{sec:results}. Systems with light coupling between groups of state variables are particularly suitable for the application or our proposed method. Below are other example systems for which treating the coupling variables $y$ in $g$ or $x$ in $h$ as disturbances would lead to decoupling.

\textbf{Linear systems with large Jordan blocks}, for example,
\begin{equation}
\dot{z} = 
\begin{bmatrix}
1 & 1 & 0 & 0 \\
0 & 1 & 1 & 0 \\
0 & 0 & 1 & 1 \\
0 & 0 & 0 & 1
\end{bmatrix}
z + B u
\end{equation}

If $z_3$ is treated as a disturbance in the equation for $\dot{z}_2$, we would obtain the decoupled components $(z_1, z_2)$ and $(z_3, z_4)$.

%
%

\textbf{Lateral Quadrotor Dynamics near Hover} \cite{Bouffard12}:
\begin{equation}
\label{eq:quad_dyn}
\dot{z} = 
\begin{bmatrix}
z_2 \\
g \tan z_3 \\
z_4 \\
-d_0 z_3 - d_1 z_4
\end{bmatrix}
+
\begin{bmatrix}
0 \\
0 \\
0 \\
n_0
\end{bmatrix}
u
\end{equation}

\noindent where $z_1$ is the longitudinal position, $z_2$ is the longitudinal velocity, $z_3$ is the pitch angle, $z_4$ is the pitch rate, and $u$ is the desired pitch angle. These dynamics are valid for small pitch angles. The system would become decoupled into the $(z_1, z_2)$ and $(z_3, z_4)$ components if $z_3$ is treated as a disturbance in $\dot{z}_2$. In fact, the full ten-dimensional (10D) quadrotor dynamics given in \cite{Bouffard12} can be decomposed into five decoupled components of 2D systems, allowing an approximation of the 10D BRS to be computed in 2D space.

It is worth noting that after decoupling the 4D system in \eqref{eq:quad_dyn} into two 2D systems, each decoupled component is in the form $\dot{y} = g(y, u), \dot{x} = b(y)$. This is exactly the form of the dynamics in \cite{Mitchell11}, allowing the 4D BRS to be exactly computed in 1D! In general, removing coupling may bring the system dynamics into a form suitable for analysis using methods that require specific assumptions on the dynamics, potentially greatly reducing computation complexity.

\section{Numerical Results \label{sec:results}}

We demonstrate our proposed method using a 4D model of an aircraft flying at constant altitude, given by
\begin{equation}
	\begin{aligned}[c]
		\dot{p}_x &= v \cos \psi \\
		\dot{\psi} &= \omega \\
		\underline \omega &\le \omega \le \bar\omega
	\end{aligned}
	\qquad
	\begin{aligned}[c]
	\dot{p}_y &= v \sin \psi \\
	\dot{v} &= a \\
	 \underline  a & \le a \le \bar a
	\end{aligned}
\end{equation}

\noindent where $(p_x, p_y)$ represent the plane's position in the $x$ and $y$ directions, $\psi$ represents the plane's heading, and $v$ represents the plane's longitudinal velocity. The plane has a limited turn rate $\omega$ and a limited longitudinal acceleration $a$ as its control variables. For our example, the computation bounds are

$$
	\begin{aligned}[c]
	p_x &\in [-40, 40] \text{ m }\\
	\psi &\in [-\pi, \pi] \text{ rad}
	\end{aligned}
	\qquad
	\begin{aligned}[c]
	p_y &\in [-40, 40] \text{ m }\\
	v &\in [6, 12] \text{ m/s}
	\end{aligned}
$$

Using the decoupled approximation technique, we create the following decoupled approximation of the system with $(p_x, \psi)$ and $(p_y, v)$ as the decoupled components:
\begin{equation}
\begin{aligned}[c]
\dot{p}_x &= d_v \sin(\psi) \\
\dot{\psi} &= \omega \\
\underline \omega &\le \omega \le \bar\omega\\
\underline{d_v} &\leq d_v \leq \bar d_v 
\end{aligned}
\qquad
\begin{aligned}[c]
\dot{p}_y &= v \sin(d_{\psi})\\
 \dot{v} &= a\\
 \underline a &\le a \le \bar a\\
\underline{d_{\psi}} &\le d_{\psi} \le \bar{d_{\psi}}
\end{aligned}
\end{equation}

We define the target set as a square of length 4 m centered at $(p_x, p_y)=(0,0)$, described by $\targetset = \{(p_x, p_y, \psi, v): |p_x|, |p_y| \le 2\}$. This can be interpreted as a positional goal centered at the origin that can be achieved for all angles and velocities within the computation grid bounds. From the target set, we define $l(z)$ such that $l(z)\le 0 \Leftrightarrow x\in\mathcal{L}$. To analyze our newly decoupled system we must likewise decouple the target set by letting $\mathcal{L}_i, i = 1,2$ be 
\begin{equation}
\begin{aligned}
\mathcal{L}_1 &= \{(p_x, \psi): |p_x|\le 2\} \\
\mathcal{L}_2 &= \{(p_y, v): |p_y|\le 2\}
\end{aligned}
\end{equation}

These target sets have corresponding implicit surface functions $l_i(x_i),i=1,2$, which then form the 4D target set represented by $l(z) = \max_i l_i(x_i),i=1,2$.

We set $\mathcal{L}$ as the target set in our reachability problem and computed the BRS $\mathcal{V}(T)$ from $\mathcal{L}$ using both the direct 4D computation as well as the proposed decoupled approximation method.

\subsection{Backward Reachable Sets}
The BRS describes in this case the set of initial conditions from which the system is guaranteed to reach the target set within a given time period $T$ despite worst possible disturbances. To analyze the BRS we vary the degree of conservatism using the disturbance splitting method described in section \ref{sec:couplingsplitting}. After applying splitting, we arrive at the following piece-wise system:
\begin{equation}
\label{eq:System_Split}
\begin{aligned}[c]
\dot{p}_x &= d_v^i \sin(\psi) \\
\dot{\psi} &= \omega \\
\underline \omega &\le \omega \le \bar\omega\\
\underline d_v^i &\leq d_v^i \leq \bar d_v^i \\
i = &1,2,\dots,M_v 
\end{aligned}
\qquad
\begin{aligned}[c]
 \dot{p}_y &= v \sin(d_{\psi}^j)\\
 \dot{v} &= a\\
 \underline a &\le a \le \bar a\\
 \underline d_\psi^j &\le d_{\psi}^j \le \bar d_\psi^j\\
 j = &1,2,\ldots,M_\psi\\
\end{aligned}
\end{equation}

We analyzed the decoupled approximation formulation with $M_v$ and $M_\psi$ ranging from $1$ to $32$. To visually depict the computed 4D BRS, we plot 3D slices of the BRS in Fig. \ref{fig:3Dplots}. In these slices the green sets are the BRS computed using the full formulation, and the gray sets are the decoupled approximations. With the definition of BRS given in \eqref{eq:reachset}, all the decoupled approximations are constructed to be under-approximations.

\begin{figure*}
	\centering
	\includegraphics[width=\textwidth]{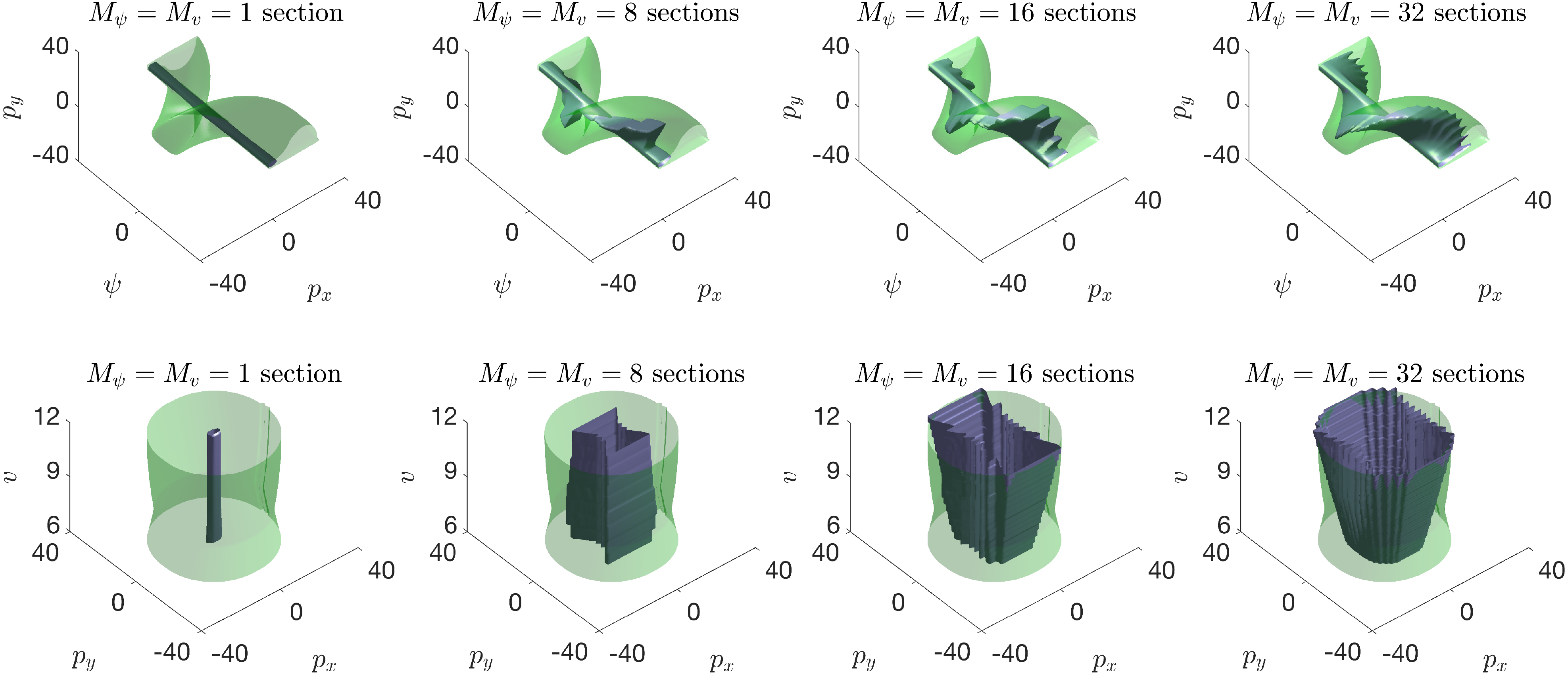}
	\caption{3D slices of the BRSs across a range of $M_v$ and $M_\psi$. The full formulation sets are in green, and the decoupled approximation sets are in gray. The top row shows 3D projections through all values of $v$. The top left plot shows this projection for $M_\psi = M_v = 1$ with $M_\psi, M_v$ increasing as we move right in the list of plots, up to $M_\psi = M_v = 32$ at top right. The bottom figures show the same sections for 3D projections through $\psi$.}
	\label{fig:3Dplots}
	\vspace{-.1in}
\end{figure*}

The top row of plots shows the 3D projections through all values of $v$. The bottom row of plots shows the 3D projections through all values of $\psi$. Moving from left to right, each column of plots shows the decoupled approximations with an increasing number of split sections $M_\psi$ and $M_v$. 

\subsection{Reconstruction Performance \label{sec:volumerecon}}
To compare the degree of conservatism of the decoupled approximations, we determined the total 4D volume of the BRS computed using both methods. We then took the ratio of the decoupled approximation volume to the full formulation BRS volume. Since under-approximations are computed, a higher volume ratio indicates a lower degree of conservatism. 

Fig. \ref{fig:VolRatio_V} shows this volume ratio as a function of $M_\psi$ and $M_v$. For example, the purple curve represents the volume ratio for $M_v = 1$ across various values of $M_\psi$, and on the other extreme, the yellow curve represents the the volume ratio for $M_v = 32$ across various values of $M_\psi$. The highest number of sections computed was with $M_\psi = M_v = 32$. 

\begin{figure}
	\centering
	\includegraphics[width=1\linewidth]{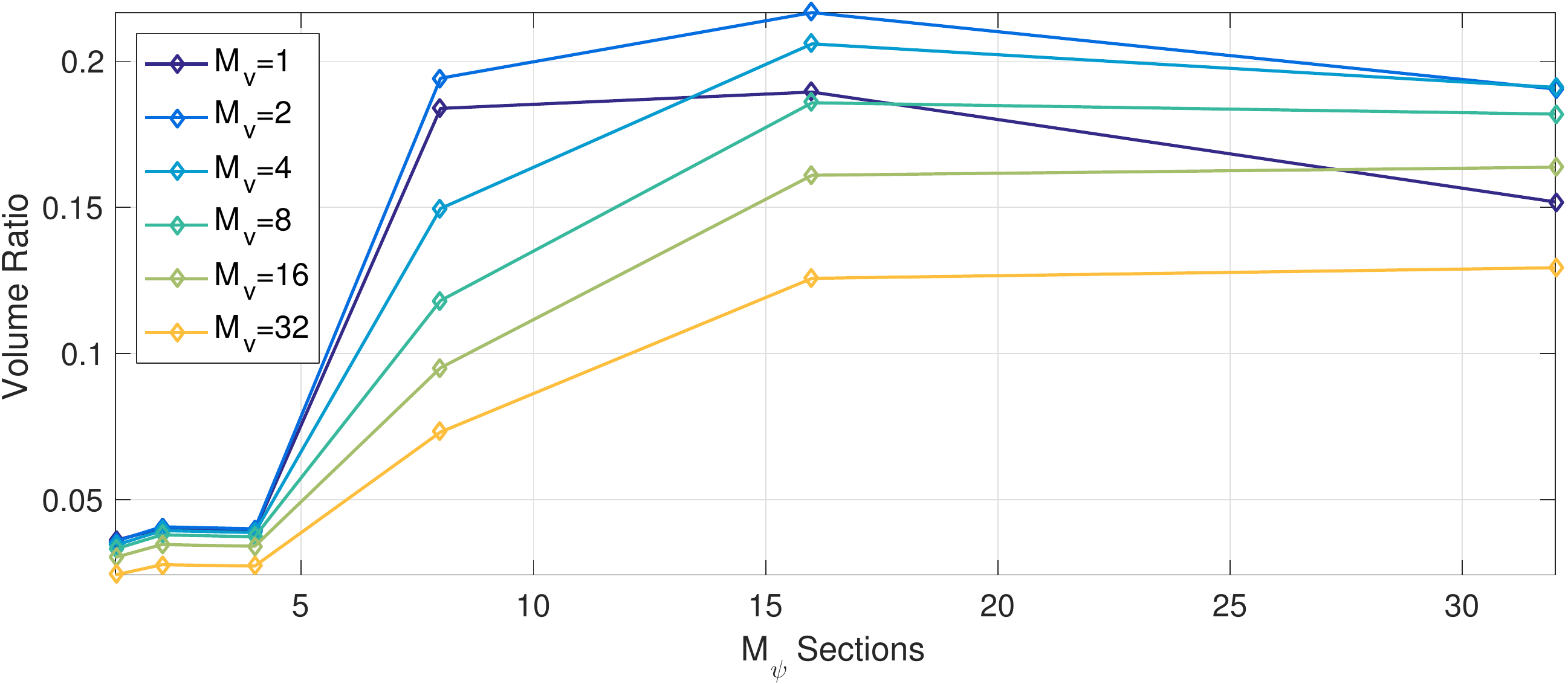}
	\caption{The vertical axis represents the ratio of the reconstructed BRS volume over the full formulation volume. The graph shows how this ratio changes as a function of number of disturbance sections. The highest volume ratio (and therefore least conservative BRS) was for $M_\psi = 16, M_v =2$.}
	\label{fig:VolRatio_V}
	\vspace{-.2in}
\end{figure}

Initially the decoupled approximations become less conservative as $M_v$ and $M_\psi$ increase. This is because splitting the disturbance range has the effect of mitigating the strength of the disturbances. However, splitting the disturbance range also restricts the allowable trajectories of the system and can eventually introduce more conservatism. For example, if the velocity disturbance range is $6 \le d_v \le 12$, the trajectories must stay within this velocity range for the duration of $T$. If this range is split, the set of disturbances has a smaller range, but likewise the trajectories for each subsection must remain within the smaller split velocity range for the time period. Therefore, there is an optimal point past which splitting does not help decrease conservatism.

In this system the least conservative approximation was for $M_\psi=16, M_v=2$. The volume ratio for this approximation was 0.217, meaning that the decoupled approximation had a volume that is 21.7\% of the volume of the BRS computed using the full formulation. The 3D projections of the set computed by $M_\psi=16, M_v=2$ can be seen in Fig. \ref{fig:Opt3DViews}.

\begin{figure}
\centering
\includegraphics[width=1\linewidth]{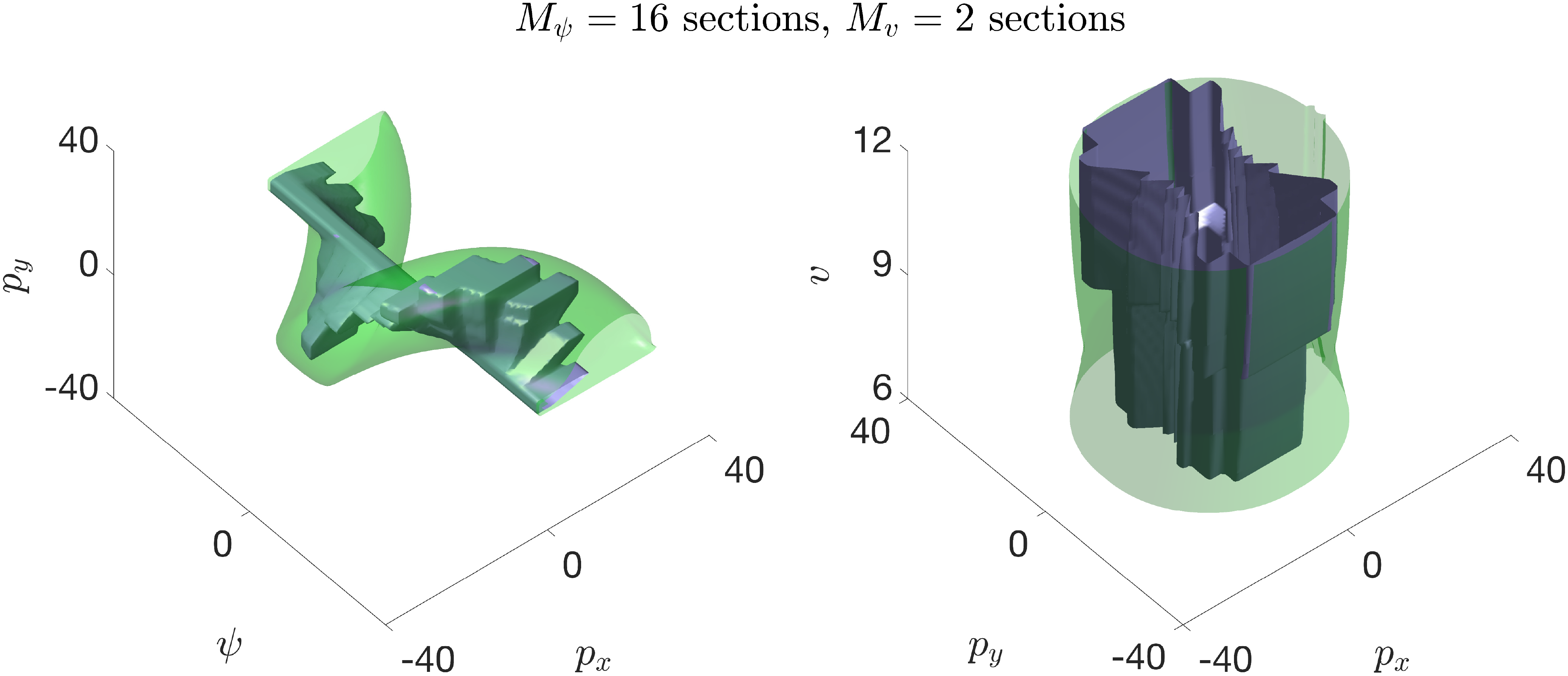}
\caption{3D slices of the BRS for $M_\psi = 16, M_v =2$. This decoupled approximation provides the largest and least conservative under-approximation.}
\label{fig:Opt3DViews}
\vspace{-.05in}
\end{figure}

\begin{figure}
	\centering
	\includegraphics[width=1\linewidth]{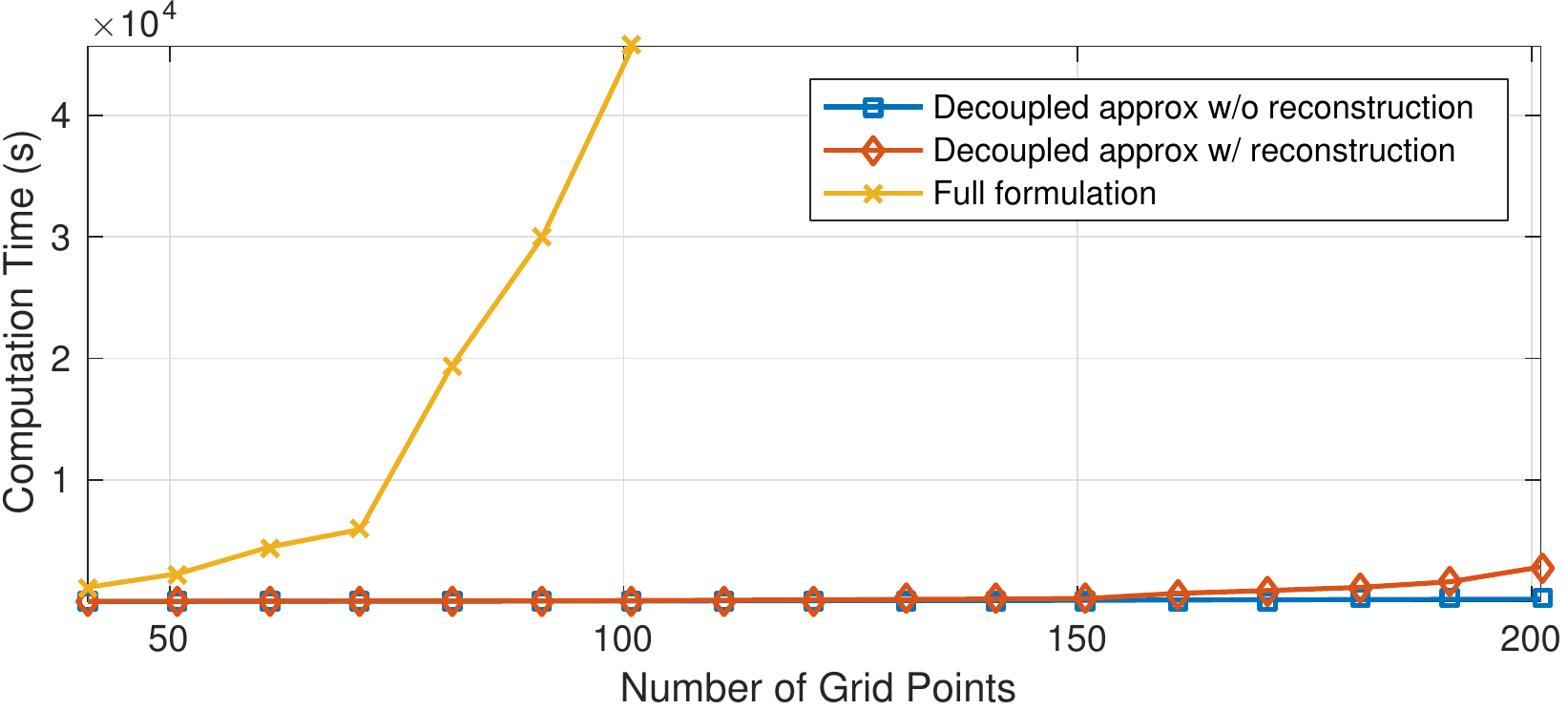}
	\caption{Computation time as a function of the number of grid points in each dimension. The full formulation (yellow curve) is orders of magnitude slower than the decoupled approximation. The decoupled approximation with reconstruction (red curve) takes a bit more time (and significantly more memory) than without reconstruction (blue curve).}
	\label{fig:CompTime_N}
	\vspace{-.2in}
\end{figure}

\subsection{Computation Time Performance}
In Fig. \ref{fig:CompTime_N} we compare the computation time of the two methods as a function of the number of grid points in each dimension. Computations were done on a desktop computer with a Core i7-5820K CPU and 128 GB of random-access memory (RAM). The full formulation (yellow curve) quickly becomes intractable as grid points are added; 100 grid points in each dimension requires 12.7 hours and 97 GB of RAM.

The decoupled approximation is orders of magnitude faster than the full formulation, and therefore can be done with many more grid points in each dimension. The decoupled approximation used was for $M_\psi=16, M_v =2$ sections, as this provided the most accurate BRS as determined in \ref{sec:volumerecon}. The runtimes would be even faster using $M_\psi = M_v = 1$ sections. We plot both the runtimes for reachability computation with 4D-reconstruction (red curve) as well as the runtimes for reachability computation alone (blue curve). Compared to the full formulation, at 100 grid points in each dimension the decoupled approximation takes 50 seconds to run and 36 seconds to reconstruct, with 625 MB of RAM to run and 6.75 GB of RAM to reconstruct. At 200 grid points the decoupled approximation takes 3.37 minutes to run and 44.1 minutes to reconstruct, with 1 GB of RAM to run and 120 GB of RAM to reconstruct.
 
In general we recommend computing the value function in only a region near a state of interest, bypassing the time and RAM required to reconstruct the function over the entire grid. Without full reconstruction of the value function, we are able to obtain results faster and for higher numbers of grid points before running out of memory, improving the accuracy of the computation.

In Fig. \ref{fig:CompTime_VSec} we compare the computation time of the 2D computations in the decoupled approximation as a function $M_v$ and $M_\psi$. Each line of the graph represents the computation time for a fixed number of $M_v$ across various values of $M_\psi$. As the number of sections increases, the computation time required increases approximately linearly, as expected.

\begin{figure}
	\centering
	\includegraphics[width=0.45\textwidth]{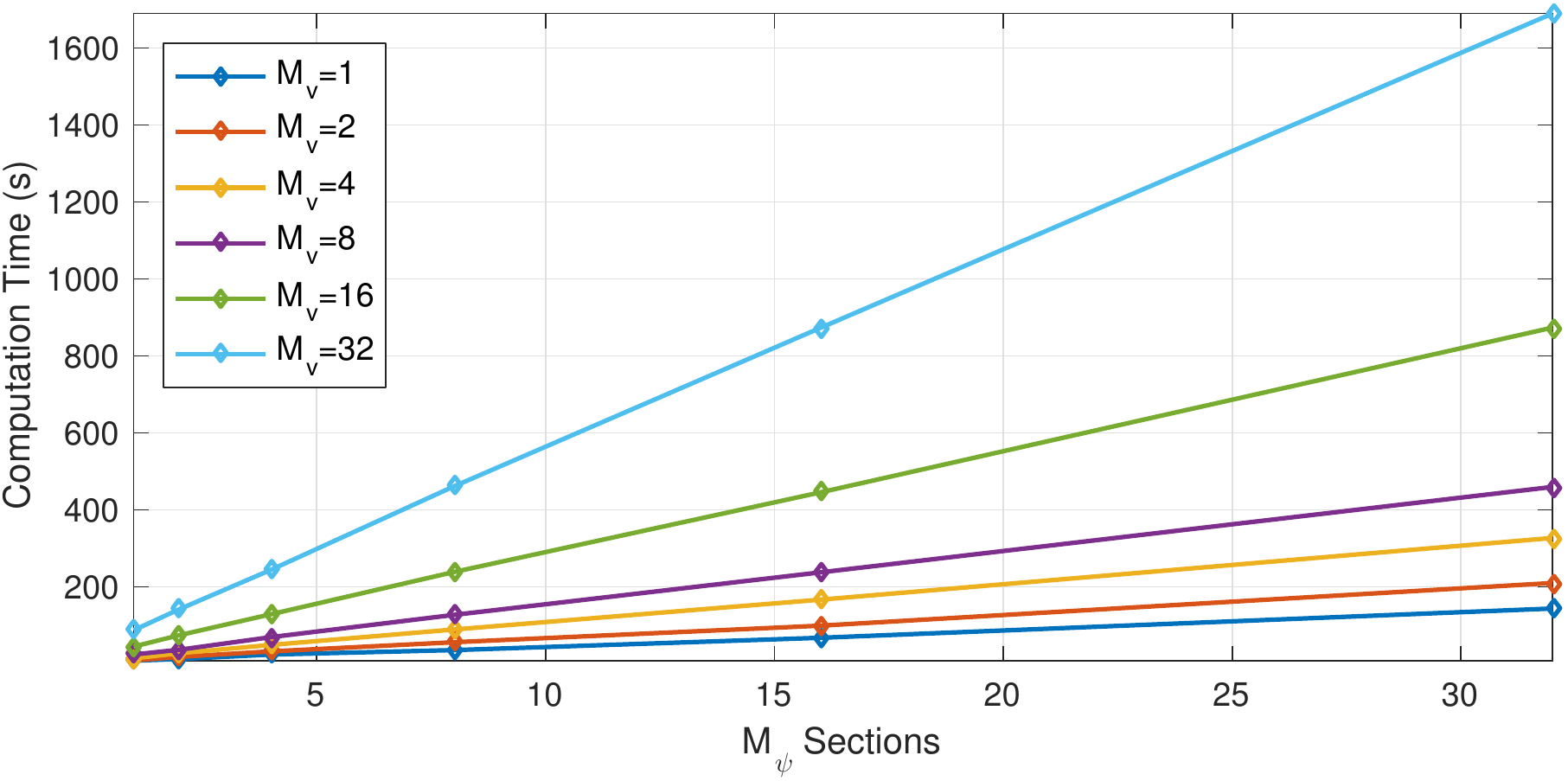}
	\caption{Computation time to compute decoupled approximation sets without reconstruction as a function of $M_v$ and $M_\psi$. As the number of sections increases, the computation time increases approximately linearly.}
	\label{fig:CompTime_VSec}
	\vspace{-.2in}
\end{figure}


\section{Conclusions}
Hamilton-Jacobi reachability analysis can provide safety and performance guarantees for many practical systems, but the curse of dimensionality limits its application to systems with less than approximately five state variables. By treating state variables as disturbances, key state dependencies can be eliminated, reducing the system dynamics to a simpler form and allowing reachable sets to be calculated conservatively using available efficient methods in the literature. 


\bibliographystyle{IEEEtran}
\bibliography{references}
\end{document}